\DeclareMathOperator{\GL}{GL}
\DeclareMathOperator{\dom}{dom}
\DeclareMathOperator{\edom}{edom}
\DeclareMathOperator{\edomst}{edom^{st}}
\DeclareMathOperator{\edomstn}{\edom^{\operatorname{st}}_n}
\DeclareMathOperator{\opm}{M}
\newcommand{\N}{\mathbb{N}}
\newcommand{\R}{\mathbb{R}}
\newcommand{\cD}{\mathcal{D}}
\newcommand{\cL}{\mathcal{L}}
\newcommand{\cR}{\mathcal{R}}
\newcommand{\cV}{\mathcal{V}}
\newcommand{\bb}{\mathbf{b}}
\newcommand{\cc}{\mathbf{c}}
\newcommand{\uu}{\mathbf{u}}
\newcommand{\vv}{\mathbf{v}}
\newcommand{\ww}{\mathbf{w}}
\newcommand{\rr}{\mathbbm r}
\newcommand{\mm}{\mathbbm m}
\newcommand{\rs}{\mathbbm s}
\newcommand{\kk}{\mathbbm k}
\newcommand{\ulx}{\boldsymbol{x}}
\newcommand{\ulxx}{\boldsymbol{\xi}}
\newcommand*{\mat}[1]{\opm_{#1}(\kk)}
\newcommand{\Langle}{\mathop{<}\!}
\newcommand{\Rangle}{\!\mathop{>}}
\newcommand{\mx}{\Langle \ulx\Rangle}
\newcommand{\px}{\kk\!\Langle \ulx\Rangle}
\def\moverlay{\mathpalette\mov@rlay}
\def\mov@rlay#1#2{\leavevmode\vtop{
		\baselineskip\z@skip \lineskiplimit-\maxdimen
		\ialign{\hfil$#1##$\hfil\cr#2\crcr}}}
\newcommand{\re}{\cR_{\kk}(\ulx)}
\newcommand{\plangle}{\moverlay{(\cr<}}
\newcommand{\prangle}{\moverlay{)\cr>}}
\newcommand{\rx}{\kk\plangle \ulx \prangle}
\newtheorem{thm}{Theorem}[section]
\newtheorem{lem}[thm]{Lemma}
\newtheorem{cor}[thm]{Corollary}
\newtheorem{prop}[thm]{Proposition}
\theoremstyle{definition}
\newtheorem{defn}[thm]{Definition}
\newtheorem{exa}[thm]{Example}
\theoremstyle{remark}
\newtheorem{rem}[thm]{Remark}
\numberwithin{equation}{section}
\begin{document}
	
\setcounter{tocdepth}{3}
\contentsmargin{2.55em} 
\dottedcontents{section}[3.8em]{}{2.3em}{.4pc} 
\dottedcontents{subsection}[6.1em]{}{3.2em}{.4pc}
\dottedcontents{subsubsection}[8.4em]{}{4.1em}{.4pc}

\setcounter{page}{1}

\title[On domains of nc rational functions]{On domains of noncommutative rational functions}

\author[J. Vol\v{c}i\v{c}]{Jurij Vol\v{c}i\v{c}${}^\star$}
\address{Jurij Vol\v{c}i\v{c}, Department of Mathematics, The University of Auckland}
\email{jurij.volcic@auckland.ac.nz}
\thanks{${}^\star$Supported by The University of Auckland Doctoral Scholarship.}

\subjclass[2010]{Primary 16S38, 93B55; Secondary 47A56, 93B20, 15A54}
\date{\today}
\keywords{Noncommutative rational function, extended domain, singularities, minimal realization, free function theory}

%%%%%%%%%%%%%%%%%%%%%%%%%%%%%%%%%%%%%%%%%%%%%%%%%%%%%%%%%%%%%%%%%%%%%%%%%%%%

\begin{abstract}
In this paper the stable extended domain of a noncommutative rational function is introduced and it is shown that it can be completely described by a monic linear pencil from the minimal realization of the function. This result amends the singularities theorem of Kalyuzhnyi-Verbovetskyi and Vinnikov. Furthermore, for noncommutative rational functions which are regular at a scalar point it is proved that their domains and stable extended domains coincide.
\end{abstract}

%%%%%%%%%%%%%%%%%%%%%%%%%%%%%%%%%%%%%%%%%%%%%%%%%%%%%%%%%%%%%%%%%%%%%%%%%%%%

\maketitle

%%%%%%%%%%%%%%%%%%%%%%%%%%%%%%%%%%%%%%%%%%%%%%%%%%%%%%%%%%%%%%%%%%%%%%%%%%%%
%%%%%%%%%%%%%%%%%%%%%%%%%%%%%%%%%%%%%%%%%%%%%%%%%%%%%%%%%%%%%%%%%%%%%%%%%%%%

\section{Introduction}

Noncommutative rational fractions are the elements of the universal skew field of a free algebra \cite{Coh}. While this skew field can be constructed in various ways \cite{Ami,Le,Li}, it can also be defined through evaluations of formal rational expressions on tuples of matrices \cite{KVV1}. This interpretation gives rise to prominent applications of noncommutative rational functions in free analysis \cite{KVV2,AM}, free real algebraic geometry \cite{HMV,OHMP,BPT,HM} and control theory \cite{BGM,BH}. The consideration of matrix evaluations naturally leads to the introduction of the {\it domain} of a noncommutative rational function. However, at first sight this notion seems intangible: since a noncommutative rational function is an equivalence class of formal rational expressions, its domain is defined as the union of the formal domains of all its representatives; see Subsection \ref{ss:21} for precise definition. Therefore new variants of domains emerged: {\it extended domains} \cite{KVV,KVV1} and {\it analytic} or {\it limit domains} \cite{HMV,HM}. Both of these notions are related to generic evaluations of noncommutative rational functions and can thus be described using a single representative of a function.

The main important breakthrough in characterizing domains was done by Dmitry Kaliuzhnyi-Verbovetskyi and Victor Vinnikov in \cite{KVV}. Perceptively combining linear systems realizations from control theory and difference-differential operators from free analysis they seemingly proved that the extended domain of a noncommutative rational function $\rr$ that is regular at the origin coincides with the invertibility set of the monic linear pencil from a minimal realization of $\rr$; see \cite[Theorem 3.1]{KVV}. While minimal realizations of noncommutative rational functions can be effectively computed \cite{BGM,BR}, monic linear pencils are key tools in matrix theory and are well-explored through control theory \cite{BEFB}, algebraic geometry \cite{Dol} and optimization \cite{WSV}. Therefore the result of Kaliuzhnyi-Verbovetskyi and Vinnikov proved to be of great importance in free real algebraic geometry and free function theory \cite{HMPV,BH,MS,HM,KV,KPV}. Alas, there is a gap in its proof and its conclusion does not hold. The reason behind is the fact that the extended domain of a noncommutative rational function is in general not closed under direct sums; a concrete instance when this occurs is given in Example \ref{counterex}.

The main results of this paper adjust and improve \cite[Theorem 3.1]{KVV}. First we recall domains and extended domains in Subsection \ref{ss:21} and give the necessary facts about realizations in Subsection \ref{ss:22}. Then we define the {\it stable extended domain} of a noncommutative rational function (Definition \ref{d:sted}). In Proposition \ref{p:dir} it is shown that the stable extended domain is always closed under direct sums, which is in contrast with the extended domain. In Theorem \ref{t:stdom} we prove that the following variant of \cite[Theorem 3.1]{KVV} holds.

{\it If a noncommutative rational function $\rr$ is regular at the origin, then its stable extended domain is equal to the invertibility set of the monic linear pencil from the minimal realization of $\rr$.}

This statement is then extended to noncommutative rational functions that are regular at some scalar point in Corollary \ref{c:shift}. Moreover, for such functions we are able to completely describe their domains due to the following result.

{\it If a noncommutative rational function is regular at some scalar point, then its domain and its stable extended domain coincide.}

See Theorem \ref{t:eq} for the proof. Finally, in Example \ref{counterex2} we present a noncommutative rational function whose domain is strictly larger than the domain of any of its representatives.

\subsection*{Acknowledgments} The author thanks Dmitry Kaliuzhnyi-Verbovetskyi and Victor Vinnikov for their consent to extend their work, Christoph Hanselka for his help with Lemma \ref{l:ch}, and Igor Klep for advice and comments.

\section{Domains, extended domains and linear pencils}

In this section we first recall the definition of noncommutative rational functions, their (extended) domains and the basics of realization theory. Then we present a counterexample to \cite[Theorem 3.1]{KVV} and explain why this phenomenon occurs.

\subsection{Skew field of noncommutative rational functions}\label{ss:21}

Throughout the paper let $\kk$ be a field of characteristic $0$. Let $\ulx=(x_1,\dots,x_g)$ be a tuple of freely noncommuting variables. By $\mx$ and $\px$ we denote the free monoid and the free unital $\kk$-algebra, respectively, generated by $\ulx$. Elements of $\mx$ and $\px$ are called {\it words} and {\it noncommutative (nc) polynomials}, respectively.

We introduce noncommutative rational functions using matrix evaluations of formal rational expressions following \cite{HMV,KVV1}. For their ring-theoretic origins see \cite{Ami,Coh}. {\bf Noncommutative (nc) rational expressions} are syntactically valid combinations of elements in $\kk$, variables in $\ulx$, arithmetic operations $+,\cdot,{}^{-1}$ and parentheses $(,)$. For example, $(x_3+x_2x_3^{-1}x_1)^{-1}+1$, $x_1^{-1}x_2^{-1}-(x_2x_1)^{-1}$ and $(1-x_1^{-1}x_1)^{-1}$ are nc rational expressions.

Fix $n\in\N$. Given a nc rational expression $r$ and $X\in \mat{n}^g$, the evaluation $r(X)$ is defined in the obvious way if all inverses appearing in $r$ exist at $X$.  Let $\dom_n r$ be the set of all $X\in\mat{n}^g$ such that $r$ is regular at $X$. Note that $\dom_n r$ is Zariski open in $\mat{n}^g$. The set
$$\dom r=\bigcup_{n\in\N}\dom_n r$$
is called {\bf the domain of $r$}. We say that a nc rational expression $r$ is {\bf non-degenerate} if $\dom r\neq\emptyset$. Let $\re$ denote the set of all non-degenerate expressions and on it we define an equivalence relation $r_1\sim r_2$ if and only if $r_1(X)=r_2(X)$ for all $X\in\dom r_1\cap \dom r_2$. The equivalence classes with respect to this relation are called {\bf noncommutative (nc) rational functions}. By \cite[Proposition 2.1]{KVV1} they form a skew field denoted $\rx$, which is the universal skew field of fractions of $\px$; see \cite[Section 4.5]{Coh} for an exposition on universal skew fields. The equivalence class of a nc rational expression $r\in\re$ is written as $\rr\in\rx$. Denote
$$\dom_n\rr=\bigcup_{r\in\rr}\dom_n r,\qquad \dom \rr=\bigcup_{n\in\N}\dom_n \rr$$
and call $\dom\rr$ {\bf the domain of $\rr$}.

Again fix $n\in\N$ and let $\Xi=(\Xi_1,\dots,\Xi_g)$ be the $n\times n$ {\bf generic matrices}, i.e., the matrices whose entries are independent commuting variables. If $\dom_n r\neq \emptyset$ for $r\in\re$, then $r[n]:=r(\Xi)$ is a $n\times n$ matrix of commutative rational functions in $gn^2$ variables. If $r_1,r_2\in\re$ and $\dom_n r_1\neq\emptyset\neq\dom _n r_2$, then $r_1\sim r_2$ clearly implies $r_1[n]=r_2[n]$. For $\rr\in\rx$ with $\dom_n\rr\neq\emptyset$ we can thus set $\rr[n]:=r[n]$ for arbitrary $r\in\rr$ with $\dom_n r\neq\emptyset$. Since the ring of commutative polynomials is a unique factorization ring, every commutative rational function is a quotient of two (up to a scalar multiple) unique coprime polynomials and therefore has a well-defined domain, namely the complement of the zero set of its denominator. Let $\edom_n \rr\subseteq \mat{n}^g$ be the intersection of the domains of entries in $\rr[n]$ if $\dom_n\rr\neq\emptyset$; otherwise set $\edom_n\rr=\emptyset$. Note that $\edom_n\rr$ is Zariski open in $\mat{n}^g$. The set
$$\edom \rr=\bigcup_{n\in\N}\edom_n \rr$$
is called {\bf the extended domain of $\rr$}; see \cite[Subsection 2.1]{KVV}.

\subsection{Realization theory}\label{ss:22}
 
A powerful tool for operating with nc rational functions regular at the origin are realizations, coming from control theory \cite{BGM} and automata theory \cite{BR}. Let $\rr\in\rx$ and assume that $0\in\dom\rr$. Then there exist $d\in\N$, vectors $\bb,\cc\in\kk^{d\times 1}$ and a {\bf monic linear pencil} $L=I-\sum_jA_jx_j$ with $A_1,\dots,A_g\in\kk^{d\times d}$ such that
\begin{equation}\label{e:real0}\rr=\cc^t L^{-1}\bb\end{equation}
holds in $\rx$. The tuple $(\cc,L,\bb)$ is called a {\bf (recognizable series) realization} of $\rr$ of size $d$; cf. \cite{BR}. In general, $\rr$ admits various realizations. The ones whose size is minimal among all realizations of $\rr$ are called {\bf minimal}. It is well known (see e.g. \cite[Theorem 2.4]{BR}) that minimal realizations are unique up to similarity: if  $(\cc,L,\bb)$ and $(\cc',L',\bb')$ are minimal realizations of $\rr$ of size $d$, then there exists $P\in\GL_d(\kk)$ such that $\cc'=P^{-t}\cc$, $\bb'=P\bb$ and $A_j'=PA_jP^{-1}$ for $1\le j\le g$.

For a monic pencil $L$ as above and $X\in\mat{n}^g$ we write $L(X)=I\otimes I-\sum_jA_j\otimes X_j$. For $\rr\in\rx$ with $0\in\dom\rr$ define
$$\cD_n(\rr;0)=\left\{X\in\mat{n}^g\colon \det L(X)\neq0\right\},\qquad \cD(\rr;0)=\bigcup_n \cD_n(\rr;0),$$
where $(\cc,L,\bb)$ is a minimal realization of $\rr$. Note that $\cD(\rr;0)$ is independent of the choice of a minimal realization because of its uniqueness up to similarity. This set is also called the {\it invertibility set} of $L$ \cite[Subsection 1.1]{HM}.

More generally, if $\alpha\in\dom_1\rr$, then $\rr_\alpha(x)=\rr(x+\alpha)$ is a nc rational function regular at $0$ and we define
$$\cD_n(\rr;\alpha)=I_n\alpha+\cD_n(\rr_\alpha;0),\qquad \cD(\rr;\alpha)=\bigcup_n \cD_n(\rr;\alpha).$$

\subsection{Ill-behavior of the extended domain}

Obviously we have $\dom\rr\subseteq\edom\rr$. Also, if $\alpha\in\dom_1\rr$, then $\cD(\rr;\alpha)\subseteq\edom\rr$. In the following example we show that $\cD(\rr;0)$ is not necessarily equal to $\edom\rr$, thus presenting a counterexample to \cite[Theorem 3.1]{KVV}.

\begin{exa}\label{counterex}
Let $g=2$ and $r=(1-x_1)x_2(1-x_1)^{-1}$. Then $\rr$ admits a minimal realization
$$\begin{pmatrix}0 & 1 & 0\end{pmatrix}
\begin{pmatrix}
 1 & 0 & -x_2 \\
 x_1 & 1 & -x_2 \\
 0 & 0 & 1-x_1
\end{pmatrix}^{-1}
\begin{pmatrix}0 \\ 0 \\ 1 \end{pmatrix},$$
so $D_1(\rr;0)=\{(x_1,x_2)\colon x_1\neq 1\}$. On the other hand, if $\xi_1,\xi_2$ are commuting independent variables, then $(1-\xi_1)\xi_2(1-\xi_1)^{-1}=\xi_2$, so $\edom_1\rr=\kk\times\kk$. Therefore $\cD(\rr;0)\neq \edom\rr$.
\end{exa}

\begin{rem}\label{r:ex1}
\cite[Theorem 3.1]{KVV} fails for $r=(1-x_1)x_2(1-x_1)^{-1}$ because \cite[Corollary 2.20]{KVV} does not hold. Let $\cL_2$ be the left shift operator on nc rational functions regular at the origin with respect to $x_2$; see \cite[Subsection 2.2]{KVV} for the definition. Then $\cL_2(r)=(1-x_1)^{-1}$ and hence $\edom \cL_2(\rr)\nsupseteq \edom \rr$, contradicting \cite[Corollary 2.20]{KVV}. Furthermore, this lack of inclusion occurs because $\edom \rr$ is not closed under direct sums: for example, $(1,1),(0,0)\in\edom_1 \rr$ but $(1\oplus 0,1\oplus 0)\notin\edom_2 \rr$ and $(I_2,I_2)\notin\edom_2 \rr$.
\end{rem}

\begin{rem}
While the sets $\dom\rr$ and $\edom \rr$ are both closed under simultaneous conjugation, we have seen in Remark \ref{r:ex1} that $\edom\rr$ is not closed under direct sums. In terms of noncommutative function theory \cite{KVV2}, the extended domain of a nc rational function $\rr$ is thus not necessarily a noncommutative set and is therefore not a ``natural'' domain of the noncommutative function $\rr$. 

Moreover, since $\edom\rr\neq \cD(\rr;0)$ in general, it is a priori not even clear that $\dom\rr\subseteq D(\rr;0)$. This could encumber operating with the domain of a nc rational function. Note that since the domain of a function is defined as the union of the domains of its representatives, determining whether a given tuple of matrices belong to the domain of $\rr$ is not straightforward. In particular, we would not even know if $\dom\rr$ is closed under direct sums, which could further compromise the study of nc rational functions from the free analysis perspective (note however that $\dom r$ is closed under direct sums for every $r\in\re$).
\end{rem}

\begin{rem}\label{r:ampl}
If $X\in\mat{n}^g$ and $\ell\in\N$, then
$$I_\ell\otimes X=\overbrace{X\oplus\cdots\oplus X}^\ell\in\mat{\ell n}^g$$
is an {\it ampliation} of $X$. As seen in Remark \ref{r:ex1}, $\edom\rr$ is in general not even closed under ampliations. However, observe that $I_\ell\otimes X\in\edom\rr$ for some $\ell\in\N$ implies $X\in\edom\rr$.
\end{rem}

\section{Main results}

In this section we introduce the notion of a stable extended domain of a nc rational function. Unlike the extended domain, the stable extended domain is closed under direct sums and Theorem \ref{t:stdom} shows that it can be described by a monic linear pencil. In Theorem \ref{t:eq} we furthermore show that the stable extended domain and the domain coincide for nc rational functions regular at some scalar point.

\subsection{Stable extended domain}

We start by defining the notion of a domain that will help us mend \cite[Theorem 3.1]{KVV}.

\begin{defn}\label{d:sted}
Let $\rr\in\rx$. {\bf The stable extended domain} of $\rr$ is
$$\edomst \rr=\bigcup_{n\in\N} \edomstn \rr,$$
where
$$\edomstn \rr=\left\{X\in\edom_n \rr\colon I_\ell\otimes X\in\edom\rr \text{ for all } \ell\in\N\right\}.$$
\end{defn}

Observe that $\edomst\rr$ is closed under simultaneous conjugation and
$$\dom\rr\subseteq\edomst\rr\subseteq\edom\rr.$$

\begin{lem}\label{l:tech}
Let $\rr\in\rx$ and assume $\dom_n\rr\neq\emptyset$. Let $\Xi$ be a tuple of $(2n)\times (2n)$ generic matrices and let $p=p(\Xi)$ be the least common multiple of the denominators of entries in $\rr[2n]$. If $\Xi'$ and $\Xi''$ are tuples of independent $n\times n$ generic matrices, then
$$p(\Xi'\oplus \Xi'')=p_1(\Xi')p_2(\Xi'')$$
for some commutative polynomials $p_1$ and $p_2$.
\end{lem}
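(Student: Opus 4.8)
The plan is to bound the least common denominator $p$ from above by a common denominator that manifestly splits under the block substitution, and then to finish with unique factorization. The point where care is needed is that the block substitution does \emph{not} commute with reduction of fractions or with taking least common denominators, so the factorization cannot simply be read off the specialized entries.

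First I would record the setup. Since $\dom_n\rr\neq\emptyset$ and $\dom r$ is closed under direct sums for any single expression (as noted in Subsection \ref{ss:21}), the ampliation of a point of $\dom_n\rr$ lies in $\dom_{2n}\rr$, so $\rr[2n]$ and hence $p$ are well defined. Fix a representative $r\in\rr$ with $\dom_n r\neq\emptyset$; then $r(\Xi)=\rr[2n]$. Let $\sigma$ be the $\kk$-algebra homomorphism on the polynomial ring in the entries of $\Xi$ that sends the off-diagonal $n\times n$ blocks of each $\Xi_j$ to $0$ and identifies the diagonal blocks with the entries of $\Xi'$ and $\Xi''$, so that $\sigma(q)=q(\Xi'\oplus\Xi'')$ for every polynomial $q$. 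Thus the goal is to show that $\sigma(p)$ factors into a polynomial in the $\Xi'$-variables times one in the $\Xi''$-variables.

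Next I would produce the splitting common denominator. Let $s_1,\dots,s_m$ be the inverted subexpressions of $r$. As recalled in Subsection \ref{ss:21}, $\dom_{2n}r$ is exactly the non-vanishing locus of $\prod_k\det s_k(\Xi)$, and unwinding the evaluation of $r$ through the adjugate formula at each inversion shows, by induction on the construction of $r$, that every entry of $r(\Xi)$ has denominator dividing $D(\Xi):=\prod_k\bigl(\det s_k(\Xi)\bigr)^{e_k}$ for suitable $e_k\in\N$. Hence $D$ is a common denominator of the entries of $\rr[2n]$, so $p\mid D$. Now each $s_k$ is itself regular on a nonempty Zariski-open subset of $\mat{n}^g$, and since evaluation of a nc rational expression on a block-diagonal tuple is block-diagonal on the dense set of points $Y'\oplus Y''$ with $Y',Y''\in\dom_n r$, the identity $s_k(\Xi'\oplus\Xi'')=s_k(\Xi')\oplus s_k(\Xi'')$ holds as an identity of rational functions. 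Taking determinants gives $\det s_k(\Xi'\oplus\Xi'')=\det s_k(\Xi')\cdot\det s_k(\Xi'')$, whence
$$\sigma(D)=\Bigl(\prod_k(\det s_k(\Xi'))^{e_k}\Bigr)\Bigl(\prod_k(\det s_k(\Xi''))^{e_k}\Bigr)=D_1(\Xi')\,D_2(\Xi''),$$
with $D_1\in\kk[\Xi']$ and $D_2\in\kk[\Xi'']$ in disjoint sets of variables.

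Finally I would conclude by unique factorization. Applying $\sigma$ to $p\mid D$ yields $\sigma(p)\mid\sigma(D)=D_1D_2$. In the UFD $\kk[\Xi',\Xi'']$ an irreducible factor of $D_1$ remains prime in $\kk[\Xi'][\Xi'']$ and lies in $\kk[\Xi']$, and symmetrically for $D_2$; hence every divisor of $D_1D_2$ is, up to a scalar, the product of a divisor of $D_1$ and a divisor of $D_2$. Applied to $\sigma(p)$ this gives $\sigma(p)=p_1(\Xi')\,p_2(\Xi'')$, i.e.\ $p(\Xi'\oplus\Xi'')=p_1(\Xi')p_2(\Xi'')$, as claimed. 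The main obstacle is the non-commutation of specialization with reduction of fractions flagged at the outset: squeezing $p$ beneath the visibly splitting denominator $D$ and then using unique factorization in disjoint variables is exactly what circumvents it, and it does so uniformly, with no regularity assumption at a scalar point.
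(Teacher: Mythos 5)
Your proof is correct, but it takes a genuinely different route at the key step. The paper gets its splitting common denominator from realization theory: by \cite[Theorem 5.2]{HMS} one writes $\rr=\vv^tQ^{-1}\uu$ for an affine pencil $Q=Q_0+\sum_jQ_jx_j$ invertible on all of $\dom r$, so the single polynomial $q=\det Q(\Xi)$ is a common denominator (via the adjugate) and it splits on block diagonals for free, since an affine pencil evaluated at $\Xi'\oplus\Xi''$ is, after a shuffle, $Q(\Xi')\oplus Q(\Xi'')$. You instead build the splitting common denominator $D=\prod_k(\det s_k(\Xi))^{e_k}$ directly from the expression tree of a representative, using the adjugate at each inversion; the splitting then comes from $s_k(\Xi'\oplus\Xi'')=s_k(\Xi')\oplus s_k(\Xi'')$, which you correctly justify by density of the points $Y'\oplus Y''$, $Y',Y''\in\dom_n r$. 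Both proofs finish with the identical step: apply the specialization to the divisibility $p\mid D$ (resp.\ $p\mid q$) and invoke unique factorization in disjoint sets of variables. What your route buys is self-containedness — no appeal to the Helton--Mai--Speicher linearization — and you are right that it works uniformly with no regularity hypothesis (the paper's proof shares this feature). What it costs is extra bookkeeping that you partly gloss: for nested inverses $\det s_k(\Xi)$ is a rational function, not a polynomial, so one must check that the exponents $e_k$ produced by the induction make $D$ an honest polynomial (the natural recursion $d_{s^{-1}}=\det(d_s\,s(\Xi))=d_s^{2n}\det s(\Xi)$ does exactly this), and one must also note that each factor $\det s_k(\Xi)$ is specializable at the generic block-diagonal point so that $\sigma(D)$ can be computed factorwise; both follow from the density argument you already set up, so these are expository rather than substantive gaps.
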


\begin{proof}
Let $r$ be a representative of $\rr$ with $\dom_n r\neq\emptyset$. By \cite[Theorem 5.2]{HMS} there exist $d\in\N$, $\uu,\vv\in\kk^{d\times 1}$ and $Q=Q_0+\sum_{j=1}^gQ_jx_j$ with $Q_j\in\kk^{d\times d}$ such that
$$\rr=\vv^tQ^{-1}\uu$$
holds in $\rx$ and $Q(X)$ is invertible for every $X\in\dom r$. Since $\dom_n r\neq\emptyset$, $q=\det Q(\Xi)$ is a nonzero polynomial. If $a$ is an invertible square matrix, then the entries of $(\det a)a^{-1}$ are polynomials in entries of $a$. Hence we conclude that $p$ divides $q$. Observe that
$$q(\Xi'\oplus \Xi'')=\det Q(\Xi'\oplus \Xi'')=\det Q(\Xi')\det Q(\Xi'')=q_1(\Xi') q_2(\Xi'').$$
Since $p(\Xi'\oplus \Xi'')$ divides $q_1q_2$, which is a product of polynomials in disjoint sets of variables, we see that $p(\Xi'\oplus \Xi'')=p_1p_2$ for some $p_1$ dividing $q_1$ and $p_2$ dividing $q_2$.
\end{proof}

\begin{prop}\label{p:dir}
Every stable extended domain is closed under direct sums.
\end{prop}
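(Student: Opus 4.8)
The plan is to verify the two defining conditions of Definition \ref{d:sted} for a direct sum $X\oplus Y$, where $X\in\edomst_n\rr$ and $Y\in\edomst_m\rr$: namely that $X\oplus Y\in\edom_{n+m}\rr$, and that $I_\ell\otimes(X\oplus Y)\in\edom\rr$ for every $\ell$. Since $I_\ell\otimes(X\oplus Y)$ equals $(I_\ell\otimes X)\oplus(I_\ell\otimes Y)$ up to a simultaneous permutation conjugation, and since stability is inherited by ampliations (if $X\in\edomst_n\rr$ then $I_k\otimes X\in\edomst_{kn}\rr$, because $I_\ell\otimes(I_k\otimes X)=I_{\ell k}\otimes X\in\edom\rr$), the second condition reduces to the first applied to the stable tuples $I_\ell\otimes X$ and $I_\ell\otimes Y$. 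Thus everything collapses to the single assertion that the direct sum of two stable tuples lies in the extended domain.

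The heart of the argument is the \emph{equal-size} case. If $X,Y\in\edomst_n\rr$ have the same size $n$, then by Definition \ref{d:sted} the ampliations $X\oplus X=I_2\otimes X$ and $Y\oplus Y=I_2\otimes Y$ both lie in $\edom_{2n}\rr$. Writing the denominator $p$ of $\rr[2n]$ in the factored form $p(\Xi'\oplus\Xi'')=p_1(\Xi')p_2(\Xi'')$ of Lemma \ref{l:tech}, the non-vanishing of $p(X\oplus X)=p_1(X)p_2(X)$ and of $p(Y\oplus Y)=p_1(Y)p_2(Y)$ forces $p_1(X)\neq0$ and $p_2(Y)\neq0$; hence $p(X\oplus Y)=p_1(X)p_2(Y)\neq0$, so $X\oplus Y\in\edom_{2n}\rr$. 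This is precisely where stability is indispensable: by Remark \ref{r:ampl} the set $\edom\rr$ is not closed under ampliations, so $X\in\edom_n\rr$ alone says nothing about $p_1(X)$, and it is the ampliation $X\oplus X\in\edom_{2n}\rr$ that extracts $p_1(X)\neq0$.

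To pass from equal to arbitrary sizes $n,m$, I would ampliate to a common size. Put $X'=I_m\otimes X$ and $Y'=I_n\otimes Y$, both of size $nm$ and both still stable. The equal-size case then gives $X'\oplus Y'\in\edom_{2nm}\rr$. Rearranging the summands by a permutation conjugation, $X'\oplus Y'$ takes the form $(X\oplus Y)\oplus W$ with $W=(I_{m-1}\otimes X)\oplus(I_{n-1}\otimes Y)$. It remains to descend from the large direct sum to its summand, for which I would establish a \emph{direct-summand closure} property of the extended domain: if $Z\oplus W\in\edom\rr$, then $Z\in\edom\rr$. Granting this, $(X\oplus Y)\oplus W\in\edom\rr$ yields $X\oplus Y\in\edom_{n+m}\rr$, which by the first paragraph also settles the ampliation condition, completing the proof.

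The step I expect to be the main obstacle is this direct-summand closure; note that the implication $I_\ell\otimes X\in\edom\rr\Rightarrow X\in\edom\rr$ recorded in Remark \ref{r:ampl} is exactly of this type. I would prove it by combining block-diagonal evaluation with Lemma \ref{l:tech}, whose proof is verbatim valid for unequal block sizes and yields a factorization $p(\Xi'\oplus\Xi'')=p'(\Xi')\,p''(\Xi'')$ for blocks of sizes $|Z|$ and $|W|$. Concretely, for $i,j\le|Z|$ the $(i,j)$ entry of $\rr[\,|Z|+|W|\,]$, restricted to the block-diagonal locus, equals the $(i,j)$ entry of $\rr[\,|Z|\,]$; comparing reduced denominators shows that the denominator of $\rr[\,|Z|\,]$ divides the first factor $p'$. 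Consequently, if $Z\oplus W\in\edom\rr$, then $p'(Z)\,p''(W)\neq0$, so $p'(Z)\neq0$, and the divisibility forces the denominator of $\rr[\,|Z|\,]$ to be nonzero at $Z$, i.e.\ $Z\in\edom\rr$. The one technical point to check carefully is that passing to the block-diagonal locus is compatible with taking reduced forms, so that the divisibility of the denominator of $\rr[\,|Z|\,]$ by $p'$ is legitimate.
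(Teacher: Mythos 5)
Your overall architecture is sound up to one point, and your equal-size core is exactly the paper's argument: Lemma \ref{l:tech} plus the ampliations $X\oplus X$, $Y\oplus Y$ supplied by stability give $p_1(X)\neq 0$, $p_2(Y)\neq 0$, hence $X\oplus Y\in\edom_{2n}\rr$. Where you diverge from the paper is in passing from equal to unequal sizes. The paper never needs your direct-summand closure of $\edom\rr$: it ampliates $Y'\oplus Y''$ by the factor $n'n''$, so that after a canonical shuffle one has a direct sum of \emph{equal-size} stable tuples (copies of $I_{\ell n''}\otimes Y'$ and $I_{\ell n'}\otimes Y''$), applies the equal-size claim to those, and then descends only along ampliations, i.e.\ only via Remark \ref{r:ampl}. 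You instead require the strictly stronger descent ``$Z\oplus W\in\edom\rr\Rightarrow Z\in\edom\rr$'' (Remark \ref{r:ampl} is its special case $W=Z^{\oplus(\ell-1)}$), which the paper does not provide, and your sketched proof of it has a genuine gap.

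The gap is this: your argument compares the restriction of $\rr[\,|Z|+|W|\,]$ to the block-diagonal locus with ``the $(i,j)$ entry of $\rr[\,|Z|\,]$''. This presupposes that $\rr[\,|Z|\,]$ exists at all, i.e.\ that $\dom_{|Z|}\rr\neq\emptyset$ (by definition $\edom_{|Z|}\rr=\emptyset$ otherwise). In your application $|Z|=n+m$, and this nonemptiness is part of what the proposition asserts; it is not among the hypotheses, which only give $\dom_{\ell n}\rr\neq\emptyset$ and $\dom_{\ell m}\rr\neq\emptyset$ for all $\ell$. The set of sizes at which $\dom\rr$ is nonempty is closed under integer multiples (ampliate a point), but nothing in the paper shows it is closed under addition, and such size statements are delicate: for $\rr=(x_1x_2-x_2x_1)^{-1}$ the domain is empty at size $1$ and nonempty at all sizes $\ge 2$, so sizes do not descend and one cannot argue by passing through smaller sizes. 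Moreover, even granting $\dom_{n+m}\rr\neq\emptyset$, identifying the restriction of $\rr[\,|Z|+|W|\,]$ with $\rr[\,|Z|\,]$ on the block locus needs a Zariski-dense set of block-diagonal points $A\oplus B$ at which some \emph{single} representative $r$ is defined, i.e.\ one representative with $\dom_{|Z|}r\neq\emptyset$ and $\dom_{|W|}r\neq\emptyset$ simultaneously; different sizes may a priori require different representatives. For ampliations this is automatic ($A\in\dom_N r$ gives $I_k\otimes A\in\dom_{kN}r$ for the same $r$), which is exactly why Remark \ref{r:ampl} is harmless and why the paper's equal-size-plus-ampliation route sidesteps the whole issue; for unequal blocks it is an unproven assumption. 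So the direct-summand lemma on which your third paragraph rests is not established, and the proof as proposed does not close.
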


\begin{proof}
If $Y'\in \mat{n'}^g$ and $Y''\in \mat{n''}^g$, then $I_{n'n''}\otimes (I_\ell\otimes (Y'\oplus Y''))$ is after a canonical shuffle equal to $(I_{\ell n''}\otimes Y')\oplus (I_{\ell n'}\otimes Y'')$. 
By Remark \ref{r:ampl} and Definition \ref{d:sted} it therefore suffices to prove the following claim: if $X',X''\in\edomstn \rr$, then $X'\oplus X''\in\edom_{2n} \rr$.

Let $p$ be the least common multiple of the denominators of entries in $\rr[2n]$. Then $$\edom_{2n}\rr=\left\{X\in\mat{2n}^g\colon p(X)\neq0\right\}.$$
Let $\Xi'$ and $\Xi''$ be tuples of independent $n\times n$ generic matrices. By Lemma \ref{l:tech} we have
$$p(\Xi'\oplus \Xi'')=p_1(\Xi')p_2(\Xi'')$$
for some commutative polynomials $p_1$ and $p_2$. Hence
$$X'\oplus X''\in\edom_{2n} \rr \iff p_1(X')p_2(X'')\neq0.$$
Since $X'\oplus X',X''\oplus X''\in\edom_{2n} \rr$, we have 
$$p_1(X')p_2(X')\neq0,\qquad p_1(X'')p_2(X'')\neq0$$
and thus $p_1(X')p_2(X'')\neq0$, so $X'\oplus X''\in \edom_{2n} \rr$.
\end{proof}

\subsection{Corrigendum to the theorem of Kaliuzhnyi-Verbovetskyi and Vinnikov}

In this subsection we correct the proof and the statement of \cite[Theorem 3.1]{KVV}. The only gap in their ingenious proof appears at the very beginning in \cite[Remark 2.7]{KVV}. From the assumption that $\edom\rr$ is closed under direct sums they derive that the left and right shifts of $\rr$ do not shrink the domain in \cite[Corollary 2.20]{KVV}. Then they show in \cite[Theorem 3.1]{KVV} that if $(\cc,L,\bb)$ is a minimal realization of $\rr$, then every entry of $L$ is a linear combination of iterated left and right shifts of $\rr$. Combining these two results they deduce that $\edom\rr=\cD(\rr;0)$.  We will show that these two main steps are indeed valid if $\edom\rr$ is replaced by $\edomst\rr$.

Let $\cL_j$ be the left shift operator with respect to $x_j$ as defined in \cite[Subsection 2.2]{KVV}. By \cite[Remark 2.25]{KVV}, $\cL_j$ can be described as follows. Every $\rr\in\rx$ that is regular at the origin can be expanded into a noncommutative power series about $0$
$$\sum_{w\in\mx}\alpha_ww;$$
then $\cL_j(\rr)\in\rx$ is regular at $0$ with the power series expansion
$$\sum_{w\in\mx}\alpha_{x_jw}w.$$

\begin{lem}\label{l:shift}
If $0\in\dom \rr$, then $\edomst \rr\subseteq \edomst \cL_j(\rr)$ for $1\le j \le g$.
\end{lem}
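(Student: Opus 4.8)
The plan is to run the two-step argument of Kaliuzhnyi-Verbovetskyi and Vinnikov, but with $\edomst$ in place of $\edom$, so that the closure under direct sums from Proposition~\ref{p:dir} can substitute for the (false) closure of $\edom\rr$. First I would reduce the claim to the levelwise inclusion $\edomstn\rr\subseteq\edom_n\cL_j(\rr)$ for all $n$. Granting this, take $X\in\edomstn\rr$; every ampliation $I_\ell\otimes X$ lies again in $\edomst_{\ell n}\rr$ (its ampliations are ampliations of $X$), so the levelwise inclusion at level $\ell n$ gives $I_\ell\otimes X\in\edom\cL_j(\rr)$ for every $\ell$, which together with the case $\ell=1$ is precisely $X\in\edomstn\cL_j(\rr)$.

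The link between $\rr$ and $\cL_j(\rr)$ I would use is the block-upper-triangular evaluation. For $X\in\mat{n}^g$ set $\tilde X\in\mat{2n}^g$ with $\tilde X_i=\begin{pmatrix}0&\delta_{ij}I_n\\0&X_i\end{pmatrix}$; reading off the leading letter in the power series expansion shows, on generic matrices,
\[
\rr[2n]\big|_{\Xi\mapsto\tilde\Xi}=\begin{pmatrix}\rr(0)I_n&\cL_j(\rr)[n]\\0&\rr[n]\end{pmatrix}.
\]
This is a genuine identity of matrices of commutative rational functions because the specialization does not annihilate the denominator: for the representative $\vv^tQ^{-1}\uu$ of Lemma~\ref{l:tech} one has $\det Q(\tilde\Xi)=(\det Q(0))^n\det Q(\Xi)\neq0$. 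Comparing denominators in the top-right block, the reduced denominator of $\cL_j(\rr)[n]$ divides the specialization $p(\tilde\Xi)$ of the denominator $p$ of $\rr[2n]$. Hence it suffices to prove $p(\tilde X)\neq0$, that is, $\tilde X\in\edom_{2n}\rr$.

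The remaining point, $\tilde X\in\edom_{2n}\rr$, is the heart of the matter, and this is where stability enters. Since $0\in\dom\rr$ we have $0_n\in\edomstn\rr$, so by Proposition~\ref{p:dir} the direct sum $0_n\oplus X$ lies in $\edom_{2n}\rr$, i.e. $p(0_n\oplus X)\neq0$. Conjugating $\tilde X$ by $\operatorname{diag}(tI_n,I_n)$ scales the off-diagonal block by $t$ and fixes the diagonal, so the tuple $\tilde X^{(t)}$ with off-diagonal $t\delta_{ij}I_n$ is conjugate to $\tilde X=\tilde X^{(1)}$ for every $t\neq0$ and degenerates to $0_n\oplus X$ at $t=0$. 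As $\edom_{2n}\rr$ is conjugation-invariant, the one-variable polynomial $t\mapsto p(\tilde X^{(t)})$ vanishes at one nonzero $t$ iff it vanishes at all nonzero $t$; if it vanished at $t=1$ it would then vanish identically, contradicting $p(\tilde X^{(0)})\neq0$. Thus $p(\tilde X)\neq0$, the denominator divisibility gives $X\in\edom_n\cL_j(\rr)$, and the reduction completes the proof.

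I expect the step $\tilde X\in\edom_{2n}\rr$ to be the only real obstacle. It cannot be read off from any single representative, whose denominator $\det Q$ typically vanishes along the entire segment $\tilde X^{(t)}$ as soon as $X\notin\dom r$; it is exactly here that the \emph{stable} extended domain, through its closure under direct sums, is indispensable, and the scaling conjugation is the device transporting the single value we control, at $0_n\oplus X$, to the value we want, at $\tilde X$.
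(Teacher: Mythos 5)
Your proof is correct, and while it rests on the same two pillars as the paper's --- Proposition \ref{p:dir} together with a block-triangular evaluation linking $\rr$ to $\cL_j(\rr)$ --- the execution is genuinely different and essentially self-contained. The paper augments by a single dimension: from $X\in\edomstn\rr$ and Proposition \ref{p:dir} it gets $X\oplus 0\in\edom_{n+1}\rr$, evaluates $\rr[n+1]$ on block \emph{lower} triangular tuples $\begin{pmatrix}\Xi&0\\ \ulxx&\zeta\end{pmatrix}$, observes that the resulting denominators do not involve the entries of $\ulxx$, and then invokes \cite[Theorem 2.19]{KVV}, exactly as in the proof of \cite[Corollary 2.20]{KVV}, to convert membership of $\begin{pmatrix}X&0\\ \ww&0\end{pmatrix}$ in $\edom_{n+1}\rr$ into $X\in\edom\cL_j(\rr)$. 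You double the dimension instead, replace the ``denominators are blind to the off-diagonal block'' observation by the conjugation-by-$\operatorname{diag}(tI_n,I_n)$ scaling argument (which needs only conjugation invariance of $\edom$ and the fact that a univariate polynomial with infinitely many zeros vanishes identically), and you re-derive the one instance of \cite[Theorem 2.19]{KVV} that is needed directly from the affine pencil realization of \cite[Theorem 5.2]{HMS}. Your route costs a little more computation but does not open the black box of the difference-differential calculus of \cite{KVV} at all --- a pleasant feature, given that the whole point of this paper is that part of that apparatus (\cite[Corollary 2.20]{KVV}) fails; the paper's proof is shorter because it reuses the parts of \cite{KVV} that remain valid.

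Two details should be tightened. First, Lemma \ref{l:tech} attaches the realization $\vv^tQ^{-1}\uu$ to an arbitrary representative $r$ with $\dom_n r\neq\emptyset$; for such $r$ the matrix $Q(0)$ may well be singular, and then $\det Q(\tilde\Xi)=(\det Q(0))^{n}\det Q(\Xi)$ vanishes identically and your justification of the specialization collapses. Since $0\in\dom\rr$ is the standing hypothesis, you should apply \cite[Theorem 5.2]{HMS} to a representative $r$ with $0\in\dom r$, which forces $\det Q(0)\neq 0$. Second, the identification of the top-right block with $\cL_j(\rr)[n]$ deserves one more line: by the block-triangular inverse formula, that block of $(\vv^t\otimes I_{2n})Q(\tilde\Xi)^{-1}(\uu\otimes I_{2n})$ equals $\rs[n]$ for $\rs=-\vv^tQ(0)^{-1}Q_jQ^{-1}\uu\in\rx$, and $\rs=\cL_j(\rr)$ then follows by comparing power series expansions at $0$, using that a nc rational function regular at the origin is determined by its series \cite{KVV1}. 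With these two patches your argument is complete.
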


\begin{proof}
By Definition \ref{d:sted} it suffices to prove $\edomst \rr\subseteq \edom \cL_j(\rr)$. If $X\in\edomstn\rr$, then $X\oplus 0\in\edom_{n+1}\rr$ by Proposition \ref{p:dir}. If $\Xi$ is a tuple of $n\times n$ generic matrices, $\ulxx=(\ulxx_1,\dots,\ulxx_g)$ a tuple of generic rows and $\zeta$ a commutative variable, then the properties of arithmetic operations on block lower triangular matrices imply that the denominators of the entries in
$$\rr[n+1]\begin{pmatrix}\Xi & 0\\ \ulxx & \zeta\end{pmatrix}$$
are independent of variables in $\ulxx$. Hence we have
$$\begin{pmatrix}X & 0\\ \ww & 0\end{pmatrix}\in\edom_{n+1}\rr$$
for all $\ww\in(\kk^{1\times n})^g$. Exactly as in the proof of \cite[Corollary 2.20]{KVV} we apply \cite[Theorem 2.19]{KVV} to show that $X$ then belongs to the intersection of the domains of the denominators of entries in the row vector $\sum_j \ww_j \cL_j(\rr)[n]$ for every $\ww\in(\kk^{1\times n})^g$. In particular, by choosing $\ww$ from the standard basis of $(\kk^{1\times n})^g$ we conclude that $X\in\edom \cL_j(\rr)$.
\end{proof}

\begin{thm}\label{t:stdom}
Let $\rr\in\rx$. If $0\in\dom \rr$, then $\edomst\rr= \cD(\rr;0)$.
\end{thm}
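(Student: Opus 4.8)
The goal is to prove the two-sided inclusion $\edomst\rr = \cD(\rr;0)$. The easy inclusion should be $\cD(\rr;0)\subseteq\edomst\rr$: since $(\cc,L,\bb)$ is a minimal realization, $\rr = \cc^t L^{-1}\bb$, and at any $X$ with $\det L(X)\neq 0$ the expression is regular, giving $\cD(\rr;0)\subseteq\edom\rr$; because the invertibility set $\cD(\rr;0)$ is closed under direct sums (hence under ampliations $I_\ell\otimes X$, as $\det L(I_\ell\otimes X)=(\det L(X))^\ell$ up to shuffle), every ampliation also lands in $\edom\rr$, so in fact $\cD(\rr;0)\subseteq\edomst\rr$. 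That direction is essentially bookkeeping.

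**The hard direction.** The substantive inclusion is $\edomst\rr\subseteq\cD(\rr;0)$, and this is exactly where the Kaliuzhnyi-Verbovetskyi--Vinnikov argument is to be repaired. The strategy is to reconstruct the monic pencil $L$ of the minimal realization from iterated shifts of $\rr$, following \cite[Theorem 3.1]{KVV}, and to control domains along the way using Lemma \ref{l:shift} in place of the faulty \cite[Corollary 2.20]{KVV}. Concretely, I would first recall from their proof that every entry of the pencil $L$ (equivalently, every entry of the matrices $A_j$ assembled into $L$) can be written as a $\kk$-linear combination of iterated left shifts $\cL_{j_1}\cdots\cL_{j_k}(\rr)$ applied to $\rr$, evaluated appropriately. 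Iterating Lemma \ref{l:shift} gives $\edomst\rr\subseteq\edom\,\cL_{j_1}\cdots\cL_{j_k}(\rr)$ for every finite word $(j_1,\dots,j_k)$, so that any $X\in\edomstn\rr$ lies in the extended domain of each of these finitely many iterated shifts that occur in the reconstruction of $L$.

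**Assembling the pencil.** The key point is then to convert membership in the extended domains of the iterated shifts into the statement $\det L(X)\neq 0$. Here I would use the realization formula together with the fact that minimality makes the controllability/observability spaces full: the resolvent $L(X)^{-1}$ can be recovered entrywise from the generic evaluations of the iterated shifts, so if $X$ lies in every relevant $\edom\,\cL_w(\rr)$ then all entries of $L(X)^{-1}$ are well-defined commutative-rational expressions that agree with the genuine inverse on a Zariski-dense set, forcing $L(X)$ to be invertible, i.e. $X\in\cD_n(\rr;0)$. I expect the main obstacle to be precisely this last step: making rigorous that the finitely many shift-domain memberships guaranteed by Lemma \ref{l:shift} suffice to conclude invertibility of $L(X)$ at the specific matrix $X$ (not merely generically), which presumably relies on \cite[Theorem 2.19]{KVV} and the difference-differential calculus of shifts, and on carefully tracking that only finitely many words $w$ enter the reconstruction because the realization has finite size $d$. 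The crucial improvement over the original proof is that Lemma \ref{l:shift} is stated for $\edomst$ rather than $\edom$, which is what repairs the direct-sum failure exhibited in Example \ref{counterex}; I would emphasize that $\edomst$ being closed under direct sums (Proposition \ref{p:dir}) is what legitimizes the inductive shift estimate.
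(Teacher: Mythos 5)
Your proposal is correct and takes essentially the same route as the paper: the inclusion $\cD(\rr;0)\subseteq\edomst\rr$ follows because the invertibility set is contained in $\edom\rr$ and is closed under direct sums, and the converse is obtained by rerunning the proof of \cite[Theorem 3.1]{KVV} verbatim with Lemma \ref{l:shift} substituted for the faulty \cite[Corollary 2.20]{KVV}. One small caveat: the reconstruction there expresses the entries of the resolvent $L^{-1}$ (not of $L$ itself) as linear combinations of iterated left \emph{and} right shifts of $\rr$, so you also need the right-shift analogue of Lemma \ref{l:shift}, which holds mutatis mutandis.
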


\begin{proof}
Observe that $\cD(\rr;0)\subseteq \edomst\rr$ because $\cD(\rr;0)$ is closed under direct sums. The converse inclusion is (mutatis mutandis) proved precisely in the same way as \cite[Theorem 3.1]{KVV}, where we utilize Lemma \ref{l:shift} instead of \cite[Corollary 2.20]{KVV}.
\end{proof}

\begin{rem}
As mentioned in \cite[Remark 3.2]{KVV}, the recognizable series realization implicitly appearing in Theorem \ref{t:stdom} can be in fact replaced by a more general form of a state space realization, which also covers the noncommutative Fornasini-Marchesini realization \cite{BGM}, the noncommutative Kaliuzhnyi-Verboverskyi realization \cite{BKV} and the pure butterfly realization \cite{HMV}.
\end{rem}

\begin{cor}\label{c:shift}
Let $\rr\in\rx$. If $\alpha\in\dom_1\rr$, then $\edomst\rr=\cD(\rr;\alpha)$.
\end{cor}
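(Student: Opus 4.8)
The plan is to reduce everything to the already-settled origin case by translating by $\alpha$. Since $\rr_\alpha(x)=\rr(x+\alpha)$ is regular at $0$, Theorem~\ref{t:stdom} gives $\edomst\rr_\alpha=\cD(\rr_\alpha;0)$, and the definition $\cD_n(\rr;\alpha)=I_n\alpha+\cD_n(\rr_\alpha;0)$ already packages the translation on the pencil side. So it suffices to prove the matching translation identity for the stable extended domain, namely $\edomstn\rr=I_n\alpha+\edomstn\rr_\alpha$ for every $n$, and then to read off the conclusion level by level. A preliminary observation is that both sides make sense for all $n$: because $\alpha\in\dom_1\rr$ and the domain of a fixed representative is closed under direct sums, the ampliation $I_n\alpha$ lies in $\dom_n\rr$, so $\dom_n\rr\neq\emptyset$ and $\dom_n\rr_\alpha\neq\emptyset$, whence $\rr[n]$ and $\rr_\alpha[n]$ are defined for all $n$.

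First I would establish the translation identity at the level of ordinary extended domains, $\edom_n\rr=I_n\alpha+\edom_n\rr_\alpha$. The key point is that $\rr_\alpha[n](\Xi)=\rr[n](\Xi+I_n\alpha)$, so the two matrices of commutative rational functions differ by the affine substitution $\Xi\mapsto\Xi+I_n\alpha$ in the $gn^2$ entrywise variables. This substitution is an automorphism of the field of commutative rational functions, hence carries coprime pairs to coprime pairs and sends the denominator of each entry of $\rr[n]$ to the denominator of the corresponding entry of $\rr_\alpha[n]$ up to a nonzero scalar. Consequently $X\in\edom_n\rr$ if and only if $X-I_n\alpha\in\edom_n\rr_\alpha$, which is the claimed identity.

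Next I would lift this to the stable extended domain using $I_\ell\otimes(X-I_n\alpha)=(I_\ell\otimes X)-I_{\ell n}\alpha$, which holds because $I_\ell\otimes(I_n\alpha)=I_{\ell n}\alpha$. Applying the extended-domain identity at size $\ell n$ shows that $I_\ell\otimes X\in\edom_{\ell n}\rr$ if and only if $I_\ell\otimes(X-I_n\alpha)\in\edom_{\ell n}\rr_\alpha$, for every $\ell$; combined with the $\ell=1$ case this yields $X\in\edomstn\rr$ if and only if $X-I_n\alpha\in\edomstn\rr_\alpha$, i.e. $\edomstn\rr=I_n\alpha+\edomstn\rr_\alpha$. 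Finally, since $\edomstn\rr_\alpha$ and $\cD_n(\rr_\alpha;0)$ both live in the space $\mat{n}^g$, the union identity $\edomst\rr_\alpha=\cD(\rr_\alpha;0)$ from Theorem~\ref{t:stdom} forces equality level by level, $\edomstn\rr_\alpha=\cD_n(\rr_\alpha;0)$; adding $I_n\alpha$ and invoking the definition of $\cD_n(\rr;\alpha)$ gives $\edomstn\rr=\cD_n(\rr;\alpha)$ for all $n$, and taking the union over $n$ finishes the argument. The only real obstacle is the translation identity for $\edom_n$ in the second step: one must check carefully that passing to $\rr_\alpha$ genuinely corresponds to the affine substitution on $\rr[n]$ and that this substitution preserves coprimality of numerator and denominator, so that the denominator loci transform exactly as claimed.
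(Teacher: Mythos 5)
Your proof is correct and takes essentially the same approach as the paper: the paper's proof simply asserts the translation identities $\edomstn\rr=I_n\alpha+\edomstn\rr_\alpha$ and $\cD(\rr;\alpha)=I_n\alpha+\cD(\rr_\alpha;0)$ and then applies Theorem~\ref{t:stdom} to $\rr_\alpha$. The only difference is that you verify these identities in detail (via the affine substitution $\Xi\mapsto\Xi+I_n\alpha$ on generic matrices and the compatibility $I_\ell\otimes(I_n\alpha)=I_{\ell n}\alpha$), steps the paper leaves implicit.
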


\begin{proof}
(1) Let $\rr_\alpha=\rr(x+\alpha)\in\rx$. Then
$$\edomstn\rr=I_n\alpha+\edomstn\rr_\alpha\qquad\text{and}\qquad\cD(\rr;\alpha)=I_n\alpha+\cD(\rr_\alpha;0)$$
for all $n\in\N$. Now we apply Theorem \ref{t:stdom} to $\rr_\alpha$ to yield the conclusion.
\end{proof}

\begin{rem}
In \cite{Vol}, matrix coefficient realization theory is applied to extend Corollary \ref{c:shift} to arbitrary nc rational functions (i.e., those not necessarily defined at a scalar point) in terms of their Sylvester realizations. Thus every stable extended domain can be described as the invertibility set of a generalized monic pencil; see \cite[Corollary 5.9]{Vol} for the precise statement.
\end{rem}

\subsection{Domains of functions regular at a scalar point}

In this subsection we improve Corollary \ref{c:shift} and precisely describe $\dom \rr$ for a nc rational function $\rr$ with $\dom_1\rr\neq\emptyset$. We require the following technical lemma.

\begin{lem}\label{l:ch}
Let $\mm$ be a $d\times d$ matrix over $\rx$ and let $X\in\dom \mm$. If $\det \mm(X)\neq0$, then there exist nc rational expressions $s_{ij}$ such that $X\in\dom s_{ij}$ and $\mm^{-1}=(\rs_{ij})_{ij}$.
\end{lem}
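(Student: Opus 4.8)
\subsection*{Proof proposal}

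The plan is to invert $\mm$ by a Schur-complement (block LU) recursion over $\rx$, arranging at every stage that the pivot we invert is not merely nonzero in $\rx$ but genuinely \emph{regular and invertible at $X$}; then every auxiliary inverse that appears is represented by an expression defined at $X$, and the assembled formulas give the $\rs_{ij}$. I would argue by induction on $d$. The base case $d=1$ is immediate: writing $\mm=(\rs)$ with $\rs$ having a representative $s$ such that $X\in\dom s$, the hypothesis $\det\mm(X)\neq0$ says $\rs(X)=\mm(X)$ is invertible, so $s^{-1}$ satisfies $X\in\dom s^{-1}$ and represents $\mm^{-1}=(\rs^{-1})$.

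For the inductive step I would first observe that left and right multiplication of $\mm$ by elementary matrices over $\rx$ whose entries are regular at $X$ and which are invertible at $X$ (permutations, and transvections $I+c\,E_{ki}$ with $X\in\dom c$) does not affect the conclusion: if $\mm'=E_L\,\mm\,E_R$, then $\mm^{-1}=E_R\,(\mm')^{-1}\,E_L$, and an expression for an entry of $(\mm')^{-1}$ defined at $X$ yields one for $\mm^{-1}$, since $E_L,E_R$ contribute only sums and products of functions regular at $X$. Granting that such operations can be used to arrange that the $(1,1)$-entry $\mm_{11}$ is regular and invertible at $X$ (the crux, discussed below), I would then pass to the Schur complement. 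With $\widehat{\rho}$ and $\widehat{\gamma}$ the first row and first column of $\mm$ with the corner deleted and $\widehat{\mm}$ the principal $(d-1)\times(d-1)$ block, set
\[ \mm/\mm_{11}\;=\;\widehat{\mm}-\widehat{\gamma}\,\mm_{11}^{-1}\,\widehat{\rho}. \]
Because $\mm_{11}(X)$ is invertible, $\mm_{11}^{-1}$ is regular at $X$, so every entry of $\mm/\mm_{11}$ is a sum of products of functions regular at $X$, hence regular at $X$; and the Schur determinant identity $\det\mm(X)=\det\mm_{11}(X)\cdot\det\big((\mm/\mm_{11})(X)\big)$ together with $\det\mm(X)\neq0$ forces $(\mm/\mm_{11})(X)$ to be invertible. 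The induction hypothesis applies to the $(d-1)\times(d-1)$ matrix $\mm/\mm_{11}$, giving expressions for its inverse defined at $X$. The standard block-inverse formula then expresses all entries of $\mm^{-1}$ through $\mm_{11}^{-1}$, $(\mm/\mm_{11})^{-1}$, $\widehat{\rho}$ and $\widehat{\gamma}$, so each is represented by an nc rational expression defined at $X$. The resulting identities hold in $\rx$ regardless of $X$, so the constructed $\rs_{ij}$ genuinely form $\mm^{-1}$.

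The hard part, and the step I expect to require the most care, is manufacturing the pivot: a priori it may happen that \emph{every} entry $\mm_{ij}$ is singular at $X$ while $\mm(X)$ is invertible (for instance when $\mm(X)$ is a permutation-type matrix all of whose $n\times n$ blocks have rank $<n$), so one cannot simply pivot on an entry, nor can one repair this by constant $\GL_d(\kk)$-transformations, which act only as $P\otimes I_n$ and cannot see inside the blocks. The way forward is to use the full strength of elementary operations over $\rx$ with \emph{matrix-valued} multipliers. The values at $X$ of functions regular at $X$ form a $\kk$-subalgebra $\cA\subseteq\mat{n}$ that is division-closed (if $c(X)$ is invertible then $c^{-1}$ is regular at $X$ and $c^{-1}(X)=c(X)^{-1}\in\cA$), and all entries $\mm_{ij}(X)$ lie in $\cA$. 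Since the first block-column of $\mm(X)$ has full column rank, one can combine block-rows with coefficients from $\cA$ to produce a combination $\mm_{11}(X)+\sum_{i\ge2}A_i\,\mm_{i1}(X)$ that is invertible; lifting the chosen coefficients $A_i\in\cA$ to functions regular at $X$ realizes this as an admissible row operation over $\rx$ and produces the required pivot.

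Thus the proposal reduces the whole lemma to a purely linear-algebraic statement: \emph{block Gaussian elimination of an invertible matrix $\mm(X)\in\mat{dn}$ with entries in a division-closed subalgebra $\cA\subseteq\mat{n}$ can be carried out using pivots that are units in $\cA$.} This is the step where $\det\mm(X)\neq0$ is used in an essential, non-scalar way, and it is the part I expect to be the genuine technical core (plausibly the portion developed with C.\ Hanselka). Once it is established, mirroring the $\cA$-elimination by the corresponding operations over $\rx$ keeps every intermediate inverse regular at $X$, and the Schur recursion above completes the proof.
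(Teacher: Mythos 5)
You set up the same skeleton as the paper — induction on $d$ with a Schur complement, keeping every auxiliary inverse regular at $X$ — and you correctly locate the crux: since every entry $\mm_{ij}(X)$ may be singular even though $\mm(X)$ is invertible, one must manufacture a pivot that is invertible at $X$. But at exactly that point the proposal stops being a proof. The statement you reduce to — that there exist $A_i$ in the division-closed subalgebra $\cA\subseteq\mat{n}$ of values at $X$ of functions regular at $X$ such that $\mm_{11}(X)+\sum_{i\ge2}A_i\,\mm_{i1}(X)$ is invertible — is asserted, not proved; you yourself flag it as ``the genuine technical core.'' Your supporting argument (the first block column of $\mm(X)$ has full column rank) does yield such a combination when the coefficients $A_i$ are allowed to be \emph{arbitrary} matrices in $\mat{n}$, by a row-space argument; but with the coefficients constrained to lie in $\cA$ it is a genuinely different statement, and nothing in the proposal addresses it. So the write-up is incomplete precisely at the decisive step.

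The missing mechanism is the one idea the paper's proof is built on: Cayley--Hamilton. Since $\mm(X)\in\mat{dn}$ is invertible, there is a univariate polynomial $f\in\kk[t]$ with $f(\mm(X))\,\mm(X)=I$. The paper sets $\widetilde{\mm}=f(\mm)\mm$: its entries are sums of products of the representatives $m_{ij}$, hence regular at $X$, and $\widetilde{\mm}(X)=I$, so the $(1,1)$ entry is automatically invertible at $X$; the Schur-complement induction then runs on $\widetilde{\mm}$ with no elementary operations at all, and at the end $\mm^{-1}=\widetilde{\mm}^{-1}f(\mm)$. Your algebraic claim is in fact true, but its proof passes through the same input: by Cayley--Hamilton the entries $B_i=\bigl(\mm(X)^{-1}\bigr)_{1i}$ lie in $\cA$ and satisfy $\sum_i B_i\,\mm_{i1}(X)=I$; since $\kk$ is infinite, for a suitable $t\in\kk$ both $B_1+tI$ and $I+t\,\mm_{11}(X)$ are invertible, and then $A_i=(B_1+tI)^{-1}B_i\in\cA$ (division closure) gives $\mm_{11}(X)+\sum_{i\ge2}A_i\,\mm_{i1}(X)=(B_1+tI)^{-1}\bigl(I+t\,\mm_{11}(X)\bigr)$, which is invertible. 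In short: right architecture, correctly identified obstruction, but the step that overcomes the obstruction — which is the actual content of the lemma — is absent, and supplying it amounts to rediscovering the paper's Cayley--Hamilton trick (which moreover makes the elementary-operations machinery unnecessary).
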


\begin{proof}
We prove the statement by induction on $d$. If $d=1$, then $X\in\dom\mm$ implies that there exists an expression $m\in\mm$ with $X\in\dom m$. Since $\mm(X)=m(X)$ is invertible, $m^{-1}$ is the desired expression.

Now assume that the statement holds for $d-1$. Let $m_{ij}$ be rational expressions satisfying $m_{ij}\in\mm_{ij}$ and $X\in\dom m_{ij}$. Since $\mm(X)$ is invertible, there exists a univariate polynomial $f\in\kk[t]$ such that $f(\mm(X))\mm(X)=I$. Write $\widetilde{\mm}=f(\mm)\mm$. Let $u\in\widetilde{\mm}_{11}$ be such that $X\in\dom u$. Since $\widetilde{\mm}(X)=I$, we have $u(X)=I$ and consequently $X\in\dom u^{-1}$. As $\widetilde{\mm}_{11}$ is a nonzero nc rational function, the Schur complement $\widehat{\mm}$ of $\widetilde{\mm}$ with respect to $\widetilde{\mm}_{11}$ is well-defined. Note that $\widehat{\mm}$ is a $(d-1)\times(d-1)$ matrix whose entries are products and sums of expressions $m_{ij},u^{-1}$. Since $\widetilde{\mm}(X)$ is invertible, $\widehat{\mm}(X)$ is also invertible, so we can apply the induction hypothesis to $\widehat{\mm}$. Hence there exist rational expressions $s_{ij}'$ such that $X\in\dom s_{ij}'$ and $\widehat{\mm}^{-1}=(\rs_{ij}')_{ij}$. The entries of $\widetilde{\mm}^{-1}$ can be represented by sums and products of expressions $m_{ij},u^{-1},s_{ij}'$, hence the same holds for $\mm^{-1}=\widetilde{\mm}^{-1}f(\mm)$.
\end{proof}

\begin{thm}\label{t:eq}
If $\dom_1 \rr\neq\emptyset$, then $\dom\rr= \edomst\rr=\cD(\rr;\alpha)$ for every $\alpha\in\dom_1\rr$.
\end{thm}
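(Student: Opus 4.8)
The plan is to observe that most of the statement is already available, so that only one inclusion needs work. The chain $\dom\rr\subseteq\edomst\rr$ is the observation recorded right after Definition~\ref{d:sted}, and the equality $\edomst\rr=\cD(\rr;\alpha)$ for every $\alpha\in\dom_1\rr$ is exactly Corollary~\ref{c:shift}. Consequently it suffices to establish the reverse inclusion $\edomst\rr\subseteq\dom\rr$; equivalently, in view of Corollary~\ref{c:shift}, it suffices to show $\cD(\rr;\alpha)\subseteq\dom\rr$ for one (hence every) $\alpha\in\dom_1\rr$.

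Next I would reduce to the case $\alpha=0$. Setting $\rr_\alpha=\rr(x+\alpha)$, a tuple $X\in\mat{n}^g$ lies in $\dom_n\rr$ if and only if $X-I_n\alpha\in\dom_n\rr_\alpha$, because substituting $x_j\mapsto x_j+\alpha_j$ in a representative of $\rr$ yields a representative of $\rr_\alpha$ and translates its formal domain by $I_n\alpha$. Hence $\dom_n\rr=I_n\alpha+\dom_n\rr_\alpha$, matching the translations $\cD(\rr;\alpha)=I_n\alpha+\cD(\rr_\alpha;0)$ from Subsection~\ref{ss:22}. Since $0\in\dom\rr_\alpha$, the desired inclusion is equivalent to $\cD(\rr_\alpha;0)\subseteq\dom\rr_\alpha$, so I may rename $\rr_\alpha$ and assume outright that $0\in\dom\rr$ and prove $\cD(\rr;0)\subseteq\dom\rr$.

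The heart of the argument is then a single application of Lemma~\ref{l:ch}. Fix a minimal realization $(\cc,L,\bb)$ of $\rr$, so that $\rr=\cc^tL^{-1}\bb$ in $\rx$ and, by Theorem~\ref{t:stdom}, $\cD(\rr;0)=\{X\colon \det L(X)\neq0\}$. Let $X\in\cD(\rr;0)$ and view $L=I-\sum_jA_jx_j$ as a $d\times d$ matrix over $\rx$. Its entries are nc polynomials, hence regular everywhere, so $X\in\dom L$ for free, while $\det L(X)\neq0$ by the choice of $X$. Lemma~\ref{l:ch} therefore produces nc rational expressions $s_{ij}$ with $X\in\dom s_{ij}$ and $L^{-1}=(\rs_{ij})_{ij}$ in the ring of $d\times d$ matrices over $\rx$. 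Substituting this into the realization gives $\rr=\sum_{i,j}\cc_i\rs_{ij}\bb_j$, so $r=\sum_{i,j}\cc_is_{ij}\bb_j$ is an nc rational expression representing $\rr$; since each $s_{ij}$ is regular at $X$ and formal domains are closed under the scalar multiples and sums that build $r$, we get $X\in\dom r\subseteq\dom\rr$. This proves $\cD(\rr;0)\subseteq\dom\rr$ and closes the loop.

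The only genuinely delicate point, and the reason Lemma~\ref{l:ch} is indispensable, is that the realization furnishes $\rr$ merely as $\cc^tL^{-1}\bb$, an expression involving a \emph{matrix} inverse: one cannot declare $X$ to be in $\dom\rr$ just because $\det L(X)\neq0$, since membership in $\dom\rr$ demands a bona fide scalar nc rational \emph{expression} regular at $X$. Lemma~\ref{l:ch} is exactly the device converting invertibility of $L(X)$ into a representation of $L^{-1}$ by scalar expressions all regular at $X$. I expect the verification that the assembled $r$ truly represents $\rr$ and is defined at $X$ to be routine once the lemma is invoked, so the main obstacle is conceptual rather than computational: recognizing that the correct matrix to feed into Lemma~\ref{l:ch} is the minimal pencil $L$, whose polynomial entries make the hypothesis $X\in\dom L$ hold automatically and whose invertibility set is precisely $\edomst\rr$.
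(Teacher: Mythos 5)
Your proposal is correct and follows essentially the same route as the paper: one inclusion comes from Corollary \ref{c:shift}, and the reverse inclusion is obtained by applying Lemma \ref{l:ch} to the minimal pencil $L$ (whose polynomial entries make the domain hypothesis automatic) and assembling the representative $r=\sum_{ij}c_is_{ij}b_j$ regular at $X$. Your explicit reduction to $\alpha=0$ via the shift $\rr_\alpha$ is only a notational variant of the paper's phrase ``minimal realization of $\rr$ about $\alpha$.''
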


\begin{proof}
By Corollary \ref{c:shift} we have $\dom\rr\subseteq \edomst\rr=\cD(\rr;\alpha)$. On the other hand, let $\cc^t L^{-1}\bb$ be a minimal realization of $\rr$ about $\alpha$ and $X\in\cD(\rr;\alpha)$. Hence $\det L(X)\neq0$, so by Lemma \ref{l:ch} there exist nc rational expressions $s_{ij}$ such that $X\in\dom s_{ij}$ and $L^{-1}=(\rs_{ij})_{ij}$. Therefore $\rr$ admits a representative
$$r=\sum_{ij} c_is_{ij}b_j$$
with $X\in\dom r\subseteq\dom\rr$.
\end{proof}

\begin{rem}
Theorem \ref{t:eq} can be compared with \cite{HM}, where authors consider {\it limit domains} and {\it hidden singularities} of symmetric nc rational functions over $\R$ that are regular at the origin. While the limit domain of a nc rational function might strictly contain its extended domain, we still observe the following analogy. If $\rr\in\rx$ and $0\in\dom\rr$, then \cite[Theorem 1.5]{HM} roughly corresponds to $\dom\rr\cap(\edom\rr\setminus\cD(\rr;0))=\emptyset$, i.e., $\dom\rr\subseteq\cD(\rr;0)$.
\end{rem}

\begin{rem}\label{r:mncre}
Nc rational expressions as defined in Subsection \ref{ss:21} are sometimes also called {\it scalar} nc rational expressions. On the other hand, a {\it matrix nc rational expression} is a syntactically valid combination of matrices over scalar nc rational expressions and matrix operations $+,\cdot,{}^{-1}$. Here we assume that the sum and the product are applied just to matrices of matching dimensions and the inverse is applied just to square matrices. Hence a nc rational function can be also represented by a $1\times 1$ matrix nc rational expression. For example, if $\dom_1\rr\neq\emptyset$, then any realization of $\rr$ can be viewed as a $1\times 1$ matrix representative of $\rr$. Now Lemma \ref{l:ch} implies that any point $X\in\mat{n}^g$, at which some $1\times 1$ matrix representative of $\rr$ is defined, belongs to $\dom\rr$ (which is defined only with scalar representatives).
\end{rem}

\subsection{Absence of distinguished representatives}

Recall that the domain of a nc rational function is defined as the union of domains of its representatives. If $\rr\in\rx$ and $r\in\rr$, then we say that $r$ is a {\it distinguished representative} of $\rr$ if $\dom\rr=\dom r$. For example, if $\rr$ is the inverse of $f\in\px\setminus\{0\}$, then $\dom\rr=\edom\rr=\dom f^{-1}$, where $f^{-1}$ is naturally considered as an element of $\re$. While the existence of distinguished representatives may seem exceptional, up to this point there was no reported instance of a function without distinguished representatives. We present such a function in the following example.

\begin{exa}\label{counterex2}
Let $g=4$ and
$$\rr=\left(x_4-x_3x_1^{-1}x_2\right)^{-1}\in \rx,\qquad
m=\begin{pmatrix}x_1 & x_2 \\ x_3& x_4\end{pmatrix} \in \px{\!}^{2\times 2}.$$
We claim that $\rr$ does not admit distinguished representatives. Since $(1,0,0,1)\in\dom_1 \rr$, we can use the minimal realization
$$\begin{pmatrix}0 & 1 \end{pmatrix}
\begin{pmatrix}1+x_1 & x_2 \\ x_3 & 1+x_4\end{pmatrix}^{-1}
\begin{pmatrix}0 \\ 1\end{pmatrix}
$$
for $\rr(x_1+1,x_2,x_3,x_4+1)$ and Theorem \ref{t:eq} to show that
$$\dom \rr=\left\{X\colon \det m(X) \neq0\right\}.$$
Suppose there exist $r\in\rr$ with $\dom r=\dom\rr$. Then $r$ contains at least one inverse. Looking at the innermost nested inverse we conclude that there exists $f\in\px$ of degree $d>0$ such that
\begin{equation}\label{e:fm}
\det f(X)=0 \quad\Rightarrow\quad \det m(X) =0\qquad \forall X\in\mat{n}^4,\ n\in\N.
\end{equation}
Note that $f(x_2,x_1,x_4,x_3)$, $f(x_3,x_4,x_1,x_2)$ and $f(x_4,x_3,x_2,x_1)$ also satisfy \eqref{e:fm}, so we can assume that $f$ contains a monomial $x_1u_0$ with $u_0\in\mx$ and $|u_0|=d-1$. Let $u_1,\dots,u_M\in\mx$ be all the words of length at most $d-1$ except $u_0$ and let $w_1,\dots,w_N\in\mx$ be all the words of length $d$ except $x_1u_0$; hence $M=\frac{4^d-1}{3}-1$ and $N=4^d-1$. Let $\cV\subset\px$ be a $\kk$-subspace spanned by $u_0,\dots,u_M,w_1,\dots,w_N$. Then $f=x_1u_0-h$ for $h\in \cV$. Furthermore, for $k\in\{2,3\}$ choose some ordering of
$$\{v^{(k)}_1,\dots, v^{(k)}_N\} = \{u_0,\dots,u_M,w_1,\dots,w_N\}\setminus x_k\mx.$$
For $k\in\{1,2,3,4\}$ let $X_k:\cV\to \cV$ be linear maps defined by
\begin{align*}
X_k u_0 &= \left\{\begin{array}{lr}
h & k=1 \\
x_ku_0 & k>1
\end{array}\right.,\\
X_k u_i &= x_ku_i \qquad 1\le i\le M,\\
X_k w_j &= \left\{\begin{array}{lr}
0 & k\in\{1,4\} \\
v^{(k)}_j & k\in\{2,3\}
\end{array}\right..
\end{align*}
Let $X=(X_1,X_2,X_3,X_4)$. By definition, we have
$$f(X)1=h-h=0.$$
On the other hand, if we identify $\cV$ with $\kk^{1+M+N}$ and the words $u_0,\dots,u_M,w_1,\dots,w_N$ with the standard basis vectors in $\kk^{1+M+N}$, then the columns of the block matrix
$$\begin{pmatrix} X_1 & X_2 \\ X_3 & X_4 \end{pmatrix}
\setlength\arraycolsep{0.24em}
=\left(\begin{array}{*{20}c}
h & x_1u_1 & \cdots & x_1u_M & 0 & \cdots & 0 & x_2u_0 & x_2u_1 & \cdots & x_2u_M & v^{(2)}_1 & \cdots & v^{(2)}_N \\
x_3u_0 & x_3u_1 & \cdots & x_3u_M & v^{(3)}_1 & \cdots & v^{(3)}_N & x_4u_0 & x_4u_1 & \cdots & x_4u_M & 0 & \cdots & 0
\end{array}\right)$$
are independent by the construction. Therefore $f(X)$ is singular and $m(X)$ is invertible, a contradiction.
\end{exa}

Note that, due to Remark \ref{r:mncre} and Theorem \ref{t:eq}, one can still say that a noncommutative rational function regular at a scalar point admits a distinguished $1\times 1$ matrix representative, namely its minimal realization.

%%%%%%%%%%%%%%%%%%%%%%%%%%%%%%%%%%%%%%%%%%%%%%%%%%%%%%%%%%%%%%%%%%%%%%%%%%%%
%%%%%%%%%%%%%%%%%%%%%%%%%%%%%%%%%%%%%%%%%%%%%%%%%%%%%%%%%%%%%%%%%%%%%%%%%%%%

\end{document}